\documentclass{article}

\usepackage{arxiv}
\usepackage[utf8]{inputenc}				
\usepackage[numbers,square,sort&compress]{natbib}
\usepackage{amsmath,amssymb}
\usepackage{float}
\usepackage[ruled,vlined,linesnumbered]{algorithm2e}
\usepackage{url}
\usepackage{mathtools,bm}
\usepackage{booktabs}
\usepackage{array}
\usepackage{enumitem}
\usepackage{hyperref}
\usepackage{cleveref}
\usepackage{graphicx}
\usepackage{subcaption}
\usepackage{amsthm}

\newtheorem{theorem}{Theorem}

\newcommand{\R}{\ensuremath{\mathbb{R}}}
\newcommand{\bu}{\ensuremath{\mathbf{u}}}
\newcommand{\bk}{\ensuremath{\mathbf{k}}}
\newcommand{\bg}{\ensuremath{\mathbf{g}}}
\newcommand{\ba}{\ensuremath{\mathbf{a}}}
\newcommand{\tba}{\ensuremath{\widetilde{\mathbf{a}}}}

\newcommand{\mriheading}[1]{%
  \begin{minipage}[t]{0.235\linewidth}
    \centering
    \normalfont $#1$
  \end{minipage}%
}

\newcommand{\mriimage}[1]{%
  \begin{minipage}[t]{0.235\linewidth}
    \centering
    \includegraphics[width=\linewidth]{#1}
  \end{minipage}%
}

\newcommand{\mriresult}[2]{%
  \begin{minipage}[t]{0.235\linewidth}
    \centering
    \includegraphics[width=\linewidth]{#1}\par
    \vspace{2pt}
    {\normalfont\scriptsize #2\par}
  \end{minipage}%
}

\title{Three-dimensional blind deconvolution by CP-parameterized kernels}

\date{} 					

\author{ Fatoumata Sanogo\\
	Department of Mathematics\\
	Bates College\\
	Lewiston, ME 04240 \\
	\texttt{fsanogo@bates.edu} \\
	\And
	Stefan Kindermann\\
	Industrial Mathematics Institute\\
	Johannes Kepler Universitat\\
	  Linz, Austria 4040 \\
	\texttt{kindermann@indmath.uni-linz.ac.at} \\
}

\renewcommand{\shorttitle}{\textit{arXiv} Template}

\hypersetup{
pdftitle={A template for the arxiv style},
pdfsubject={q-bio.NC, q-bio.QM},
pdfauthor={David S.~Hippocampus, Elias D.~Striatum},
pdfkeywords={First keyword, Second keyword, More},
}

\begin{document}
\maketitle

\begin{abstract}
We consider the problem of reconstructing a kernel $k$ and image $u$ in a blind deconvolution problem from convolutional data $g = k*u$. We particularly focus on the 3D case, i.e., on volumetric images. The problem is approached by imposing a semiparametric CP decomposition for the kernel and a variational TV penalty for the image. We also discuss a motion blur ambiguity effect, i.e., a nonuniqueness issue of the kernel in the blind deconvolution problem that may appear in certain videos of moving objects. The blind deconvolution algorithm uses an alternating minimization approach with TV regularization for $u$ and a CP decomposition step for the kernel. These are implemented with positivity projection, kernel normalization, and\textbackslash or causal support projection steps. Several numerical examples for 3D MRI images, hyperspectral images, and a grayscale test video show the effectiveness of the method.
\end{abstract}

\keywords{blind deconvolution \and CP decomposition \and regularization \and inverse problems \and motion blur.}

\section{Introduction} 

Blind deconvolution refers to the problem of reconstructing the two components $k$ (kernel) 
and $u$ (image) in a convolution $g = k*u$ from possibly noisy data $g$. The problem has numerous 
applications in all fields related to image processing, such as astronomy, microscopy, medical imaging, and communication, but it also serves as a challenging nonlinear inverse benchmark problem in applied mathematics. 

One of the difficulties arises from the fact that deconvolution with a known kernel is itself an ill-posed problem, and with the kernel function---a multiplier in Fourier space---being an additional 
unknown, the troubles are also multiplied. In fact, the blind deconvolution problem suffers from severe nonuniqueness and nonlinearity in addition to ill-posedness; see, for instance, \cite{Chanbook,justen,xue2022b}. 

Existing approaches to blind deconvolution are often based on variational (or related Bayesian) 
approaches. Variational methods are often variants of general Tikhonov regularization, typically based on a least-squares minimization of the data fidelity together with penalty terms for $u$ and $k$ as priors. Practical methods follow from designing optimization routines to solve the regularization functional; see, for instance,   
\cite{BuOsh,justen,Osher2005,Chan,Wolf} for a non-exhaustive list of references.

Our approach is to consider the 3D deconvolution problem with a low-rank CP-decomposed kernel. That is, we suppose that the kernel tensor can be written as a finite sum of rank-one tensors. Representing the kernel as a low-rank object has been considered in the literature, but, to the best of our knowledge, this was not done for 3D kernels  with CP low rank. In \cite{si2019understanding}, the authors propose a low-rank-based regularization for a 2D blur kernel to exploit structural information in degraded kernels. Moreover, in \cite{perrone} it is shown that a low-rank SVD approximation of the PSF produces a short sum of Kronecker-product blurring operators. In \cite{ren2018deep}, a non-blind deconvolution, using generalized low-rank approximations of the pseudo-inverse kernels, is proposed, which  then leads to the representation of the  deconvolution as a weighted sum of separable one-dimensional filters.

The focus of this article is twofold. First, we consider blind deconvolution in three-dimensional space (instead of the more common two-dimensional kernel case). The data and unknowns $u$ and $k$ can be considered components of 3D tensors, where $u$ could represent grayscale videos, color images, or a family of images with an additional component factor. 

A second aspect, related to the above-mentioned 
nonuniqueness, is the use of parameterized kernels. Note that it is common to impose certain restrictions on the kernel to reduce the nonuniqueness problem; see~Section~\ref{sec:2}. Often, one parameterizes the kernel by a finite-dimensional family of functions, which obviously stabilizes the problem. Here, we use a semiparametric family, i.e., an infinite-dimensional class of functions, but parameterize the kernel through a tensor decomposition.
More specifically, we represent and compute kernels in the form of low-rank CP decompositions. We apply a variational approach with a TV-prior for $u$ and an implicit low-rank restriction 
for $k$. The resulting optimization procedure is solved by an alternating direction approach, with a CP decomposition replacing the minimization step for $k$. 
Furthermore, it has been pointed out in \cite{PerroneFavaro2} that several 
normalization and projection steps (and their order) are an important ingredient in blind deconvolution, and we include them as key components of our algorithm. 

Related to our focus on 3D images and videos, we also briefly discuss an apparent nonuniqueness, 
named motion blur uncertainty, in Section~\ref{sec:uncertainty} that might be relevant for videos of single moving objects. 

\paragraph*{Notation}
We use the following notation: Functions are 
represented by normal font lower case letters, 
while their discretization, i.e., tensors, matrices, and vectors are represented by bold-face lower-case letters ($\bf k$, $\bf a$, $\bf u$, etc.) The components of their entries are again normal-font letters with subindices, 
e.g., ${\bf k} = (k_{i,j,k})_{i,j,k}$, or, 
for brevity, we often simply write 
${\bf k} = k_{i,j,k}$.

\section{Blind deconvolution and CP decomposition}\label{sec:2}
Our abstract blind deconvolution problem is that of finding a blurring kernel $k$ and
a ``clean'' 3D image $u$ (or volumetric image)
from (possibly noisy) convolution data $g$ as follows:
\begin{equation}\label{main}
 k*u:= \int_{\R^3} k(x-y) u(y) dy = g(x) \qquad x,y \in \R^3. 
\end{equation}
See, for instance, \cite{Chan,justen} and the references therein for further context. 
As a minimal requirement we impose $k,u \in L^1(\R^3)$ such that  the convolution 
integral exists and is in $L^1(\R^3)$. For the kernel $k$, we always additionally impose 
a standard normalization assumption \cite[p.~223]{Chan}, namely 
\begin{align}\label{norm}
 &\int_ {\R^3} k(z) dz  = 1  \qquad \text{normalization}. 
\end{align}
Further constraints on $k$ that may or may not be used are 
\begin{alignat}{2} 
  k(z)&\geq 0&  \qquad &\text{positivity}, \\ 
  k(-z) &= k(z)&  \qquad &\text{symmetry}. \label{sym}\ 
\end{alignat}

A major problem with blind deconvolution is the inherent nonuniqueness. 
This can be formally seen by taking the Fourier transform, which renders the 
problem into 
\[ \hat{k}(\xi)\hat{u}(\xi) = \hat{g}(\xi), \qquad \xi \in \R^3, \]
from which it is obvious that the individual functions 
$\hat{k}$ and $\hat{u}$ are underdetermined in the 
product $\hat{k}\hat{u}$ given by the data.  The role of the normalization 
\eqref{norm} is to fix a multiplicative constant of $k$ in the product, while 
the symmetry \eqref{sym} makes $\hat{k}$ a real-valued function; thus, 
it fixes a phase factor in the product. Still, 
since we may write $\hat{k}\hat{u} = \frac{\hat{k}}{\hat{m}} \hat{u} \hat{m}$ 
with an arbitrary function $m(\xi) \not = 0$, the inherent nonuniqueness still remains. 

Besides the nonuniqueness, the ill-posedness due to instability of 
any typical deblurring problem raises even further problems. 
A well-known approach to tackling, though not resolving, these issues is 
to use regularization: A common variational regularization approach is 
to minimize a Tikhonov functional of the form 
\begin{equation}\label{tik}
 J(u,k) = \frac{1}{2}\| k*u - g\|_{L^1}^2 + \alpha R(u) + \beta S(k), 
\end{equation}
where $R$ and $S$ are respective penalty functionals for $u$ and $k$, respectively. 
Typically, for $R(u)$, the well-known total variation 
\[ R(u) = |u|_{TV} \]
is chosen, due to the popularity and success of the Rudin-Osher-Fatemi
\cite{RuOsFa} approach in 
denoising and deblurring. See, for instance, \cite{Chan}.  
Another approach is to use Sobolev spaces \cite{BuOsh} as penalties. 
Even though a regularization approach will result in some solution $k$, $u$
under weak regularity conditions, it is not guaranteed that this is 
``the'' wanted solution; moreover, the result also depends on the algorithm 
for minimizing the functional: the devil is in the detail
\cite{PerroneFavaro2}. 

\subsection{The semi-parameteric CP-model and the discretization} 
An alternative to the regularization/penalization approach is to reduce the complexity by imposing parametric models, in particular for $k$. 
That is, one a priori imposes the restriction that the kernel $k$ is out of some low/finite-dimensional manifold  \cite[Section 5.5.1]{Chan}. For instance, a set of Gaussians with varying variance could be used. This makes sense as long as the data are known to come from such a low-dimensional set of blurring operators. 

Our approach is a compromise in the sense that we keep a non-parameteric model for $k$ while reducing complexity by a CP decomposition. That is, we assume  that the kernel function can be written as a finite sum of tensor products of one-dimensional functions:
Thus, our continuous model assumption is 
\begin{equation}\label{contCP}
 k(x_1,x_2,x_3) = \sum_{r=1}^R a_{1,r}(x_1) a_{2,r}(x_2) a_{3,r}(x_3), \qquad x = (x_1,x_2,x_3)  \in \R^3,
\end{equation}
with $R\geq 1$ is a fixed integer. 

Although our main focus is the discrete model and 
its numerical computation, we  briefly discuss 
some analytical properties of the continuous 
Tikhonov regularization associated with the
semiparametric model. That is, consider the minimization of the  functional 
\begin{equation}\label{tik1}
\begin{split} 
 J(u,a_{1,r},a_{2,r},a_{3,r}) = \frac{1}{2}\| k*u - g\|_{L^1}^2 + \alpha |u|_{TV} &+ \beta 
 \left( \sum_{r=1}^R \|a_{1,r}\|_A^2 + 
 \|a_{2,r}\|_A^2 
 + \|a_{3,r}\|_A^2 \right) \\ 
& \text{subject to \eqref{contCP},}
\end{split}
\end{equation}
with some norm $\|.\|_A$ as regularization for 
the factors $a_{i,r}$.

Note that if an $L^1$-norm would be used as regularization for the kernel $k$, then the functional does not per se allow for a minimizer due to the non-reflexivity of $L^1$. Consequently, existence of a minimizer strongly depends  on the chosen norms for the factor components $a_{i,r}$. 
Let us state some sufficient conditions:

\begin{theorem}
Let $\alpha$, $\beta >0$, and let $R < \infty$.  
Assume that $u$ in \eqref{tik1} is restricted to 
a bounded domain and that the constraint 
$\int_{R^n} u(x)dx = c$, for some fixed $c$, 
is added to the minimization. 
Furthermore, assume that the penalty 
norms $\|.\|_A$ for the factors $a_{i,r}$ 
correspond to a Banach space that is 
compactly embedded into $L^1$. 
Then a minimizer of \eqref{tik1} exist. 
\end{theorem}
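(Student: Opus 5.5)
The plan is the direct method of the calculus of variations. The admissible set consists of $u \in BV(\Omega)$ with $\Omega$ a bounded domain and $\int u = c$, together with factors $a_{i,r}$ in the Banach space $A$. The functional is bounded below by $0$. It is finite at some admissible point, for instance $u \equiv c/|\Omega|$ and $a_{i,r}=0$, where $J = \frac12\|g\|_{L^1}^2 + 0 + 0 < \infty$. So we can take a minimizing sequence $(u^n, a^n_{i,r})$ with $J(u^n,a^n_{i,r}) \to \inf J$. We then show that a subsequence converges in a topology for which $J$ is lower semicontinuous.

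\textbf{Compactness.} Boundedness of $J$ along the sequence gives $|u^n|_{TV} \le C/\alpha$ and $\|a^n_{i,r}\|_A \le C/\beta$.

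For $u^n$, the constraint $\int_\Omega u^n = c$ combined with the Poincaré--Wirtinger inequality on the bounded (Lipschitz) domain $\Omega$ gives $\|u^n - c/|\Omega|\|_{L^1} \le C_P |u^n|_{TV}$. Hence $u^n$ is bounded in $BV(\Omega)$. Controlling the mean in this way is exactly why the integral constraint is needed, since the TV seminorm does not see constants. By the compact embedding $BV(\Omega)\hookrightarrow L^1(\Omega)$, a subsequence satisfies $u^n \to u$ in $L^1$.

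For the factors, boundedness in $A$ and the compact embedding $A \hookrightarrow L^1(\R)$ give, after a further subsequence, $a^n_{i,r} \to a_{i,r}$ in $L^1(\R)$ for all finitely many indices $i,r$. This uses $R<\infty$.

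\textbf{Continuity of the data term.} On $\R^3$ the tensor product satisfies $\|a_1\otimes a_2\otimes a_3\|_{L^1} = \prod \|a_i\|_{L^1}$. Using a telescoping estimate, $a^n_{1,r}\otimes a^n_{2,r}\otimes a^n_{3,r} \to a_{1,r}\otimes a_{2,r}\otimes a_{3,r}$ in $L^1(\R^3)$. Hence $k^n \to k$ in $L^1(\R^3)$, where $k$ has the CP form \eqref{contCP}. Young's inequality then gives
\[ \|k^n*u^n - k*u\|_{L^1} \le \|k^n-k\|_{L^1}\|u^n\|_{L^1} + \|k\|_{L^1}\|u^n-u\|_{L^1} \to 0, \]
so the fidelity term converges. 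The constraint $\int u = c$ also passes to the limit by $L^1$ convergence.

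\textbf{Lower semicontinuity of the penalties.} TV is lower semicontinuous with respect to $L^1$ convergence. For the $A$-norms, I expect the main subtlety here: we need $\|a_{i,r}\|_A \le \liminf \|a^n_{i,r}\|_A$ under $L^1$ convergence. If $A$ is reflexive, as for Sobolev spaces $H^s$, we extract a weakly convergent subsequence in $A$. Its weak limit must coincide with the $L^1$ limit, because the embedding is continuous and hence weak-to-weak continuous. Weak lower semicontinuity of the norm then gives the inequality.

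If $A$ is not reflexive, for example $A = BV$, I would instead use that the $A$-norm is $L^1$-lower semicontinuous. For $BV$ this holds via the dual representation of the total variation. Otherwise I would add lower semicontinuity as an implicit standing property of the norm, namely that the closed $A$-balls are closed in $L^1$.

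Combining the three parts, $J(u,a_{i,r}) \le \liminf J(u^n,a^n_{i,r}) = \inf J$, so the limit is a minimizer.
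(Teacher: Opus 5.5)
Your proposal is correct and follows the same direct-method argument as the paper: compact embedding of the factors into $L^1$, $L^1$-convergence of $k$ via Fubini and $R<\infty$, compactness of $u$ from the mean constraint and $BV\hookrightarrow L^1$ on a bounded domain, and continuity of convolution $L^1\times L^1\to L^1$. You are more careful than the paper, whose proof never checks that the $L^1$-limits of the factors lie in $A$ with $\|a_{i,r}\|_A\le\liminf_n\|a^n_{i,r}\|_A$; your reflexivity argument, or the closed-balls assumption in the non-reflexive case, is exactly what that step requires, because compact embedding alone does not give it (e.g.\ bounded sequences of $C^1$ functions on a compact interval can converge uniformly to a non-$C^1$ tent function).
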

\begin{proof}
By the standard approach of variational analysis, 
we consider a minimizing sequence 
$(u_n,a_{i,r,n})$ 
that is hence bounded in the respective norms.
By compact embedding, we find an $L^1$-convergent subsequence of the factor sequence $a_{i,r,n} \to a_{i,r}$. Thus, the corresponding kernel $k_n$ 
converges as well in $L^1$ by Fubini's theorem and 
since the sum of the factors is finite. 
Also, by the added constraint on $u$, we may 
use a compact embedding of TV into $L^1$, 
which is valid in any dimension and for bounded domains \cite[Cor. 3.49]{Ambrosio}. Since the convolution 
is continuous from $L^1 \times L^1 \to L^1$, the existence of a minimizer follows. 
\end{proof}

In the remainder of the paper, we restrict ourselves to the discrete case. Here, the penalty  for the factor matrices is not needed  since 
we will impose a normalization 
$\int_{\R^n} a_{i,r}(x)dx = 1$ in the discrete case together with a positivity constraint, which thus implies an $L^1$-bound.  

Also the constraint for $u$ is not needed since this is  implicitly contained in the resulting condition $k* 1 \not = 0$; see \cite[Hypothesis (H2)]{Chambollelions}). 
It then follows  rather elementary that the 
discrete variant of \eqref{tik1} has a minimizer.

\subsubsection{The discrete model}
The corresponding discrete model is obtained after discretization. 
We restrict the domain for the convolution to $[-L,L]^3$, with $L$ large enough, 
and 
assume a uniform discretization on a 3D grid 
\begin{align*}
 x_{i,j,k} &= \left(-L +\tfrac{i-1}{(I-1) 2 L},-L +\tfrac{j-1}{(J-1) 2 L}, -L +\tfrac{k-1}{(K-1) 2 L}\right), \\
 &\qquad i=1,\ldots,I, \ j= 1,\ldots, J, \ k = 1,\ldots, K.
\end{align*} 
Let the components of the 3D-tensor ${\bf k} := k_{i,j,k}$ be given by the values  $k(x_{i,j,k})$,
and 
let ${\bf a}_{1,r} = a_{1,r}\left(-L +\tfrac{i-1}{(I-1) 2 L}\right)_{i=1,I}$  and similarly for ${\bf a}_{2,r}$ and ${\bf a}_{3,r}$.
Then the discrete version of  \eqref{contCP} is 
\begin{equation} k_{i,j,k} = {\bf k} := \sum_{r=1}^R {\bf a}_{1,r} \otimes  {\bf a}_{2,r} \otimes {\bf a}_{3,r},  \qquad 
{\bf a}_{1,r} = ({a}_{1,r})_i, 
{\bf a}_{2,r} = ({a}_{2,r})_j, 
{\bf a}_{3,r} = ({a}_{3,r})_k, 
\end{equation} 
where $\otimes$ denotes the tensor product of vectors. 
This identity is exactly the CP decomposition 
 of $k_{i,j,k}$ into $R$ rank-1 tensors.
In the same way, we discretize the volumetric image $u$ and represent it 
by the tensor $u_{i,j,k}$.  

Our blind deconvolution approach is now a Tikhonov-like regularization with 
the CP decomposition in place of a penalty for $k$. 
That is, after discretization of the convolution integral and using a discrete TV penalty 
for $u$, 
we end up  minimizing  
\begin{equation}\label{CP1}
\begin{split} 
 J_{CP}(\bu,{\bf a}_{1,r},{\bf a}_{2,r},{\bf a}_{3,r}) &= 
\frac{1}{2}\| {\bf k}*{\bf u} - {\bf g}\|_{\R^{I\times J\times K}}^2 + \alpha |{\bf u}|_{TV} \\
& \qquad \text{ s.t. } 
 {\bf k} = \sum_{r=1}^R {\bf a}_{1,r} \otimes  {\bf a}_{2,r} \otimes {\bf a}_{3,r}. 
  \end{split}
\end{equation}
Here the parameter $R$ is fixed and in our setup chosen as a small number, e.g., as $R = 1$. 
As discrete TV seminorm, we take the anisotropic version, 
\[ |{\bf u}|_{TV} = |D_1^+ {\bf u}|_{\ell^1} +
 |D_2^+ {\bf u}|_{\ell^1} +  |D_3^+ {\bf u}|_{\ell^1}, 
\]
where $D_i$, $i=1,2,3$, are the discrete forward difference operators with respect to the $i$th component of $\bu$ and $|.|_{\ell^1}$ denotes the discrete 
$L^1$-norm, i.e., the sum of absolute values.

In order to minimize the above functional, an alternating minimization  approach is 
used. This is appealing since it splits the problem into a standard deblurring one 
and an update step for the CP factors. 
Note that a minimization over $\bu$, given $ {\bf k} = \sum_{r=1}^R {\bf a}_{1,r} \otimes  {\bf a}_{2,r} \otimes {\bf a}_{3,r}$, 
is equivalent to solving 
\[  \frac{1}{2}\| {\bf k}*{\bf u} - {\bf g}\|_{\R^{I\times J\times K}}^2 + \alpha |{\bf u}|_{TV}  \to \min_{\bf u}, \]
which corresponds to TV-regularized variational deblurring, for which a variety 
of methods are available \cite{Chan, 5173518,tao, ZHANG2020107631,shao2024revisiting,sanogo2021}. 

\subsection{Motion-blur uncertainty}\label{sec:uncertainty}
We have pointed out some  well-known standard nonuniqueness issues with the blind deconvolution problem. 
During our numerical experiments, we encountered another nonuniqueness problem that 
is related to 3D images (videos) of moving objects and the related space/time  uncertainty in 
motion blur. This issue can be seen as a special case of shift uncertainty \cite[Theorem 5.9]{Chanbook}.

Let us illustrate this as follows: 
As stated above, we consider a 3D deconvolution problem with a low-rank CP decomposed kernel as in \eqref{contCP} and
without discretization.  Specifically, we now denote the components of a vector $x$ by $x = (x_1,x_2,t)$, indicating that the last variable corresponds to a time variable and 
$x_1$, $x_2$ are the 2D spatial coordinates. 
Assume now a CP-rank 1 decomposition of the kernel in  \eqref{contCP}with normalized components 
\[ k(x_1,x_2,t) = a_1(x_1) a_2(x_2) a_3(t),\qquad \int_\R a_i(x) dx = 1.  \]
The observed uncertainty is related to movies of moving objects; we thus model  $u$ by 
\[ u(x_1,x_2,t) = f(x_1,x_2- t),  \] indicating a movement in the $x_2$-direction. 
The recorded data according to model \eqref{main} are then 
\begin{equation}\label{m1} y(x_1,x_2,t) = \int_\R  \int_\R  \int_\R  a_1(x_1-y_1) a_2(x_2-y_2) a_3(t - \tau)  f(y_1,y_2- \tau) dy_1 dy_2 d\tau.  
\end{equation}
Upon the change of variable $ y_2 -  \tau \to \sigma$, i.e., $\tau =  y_2 -\sigma,$ 
we have 
\begin{equation}\label{m2} y(x_1,x_2,t) = 
 \int_\R  \int_\R  \int_\R 
 a_1(x_1-y_1) a_2(x_2-y_2) a_3(t -  y_2 +\sigma) f(y_1,\sigma) d \sigma dy_2 dy_1. \\
\end{equation} 
On the other hand, performing a change of variables in \eqref{m1} by 
$ y_2 -  \tau \to z_2$, i.e., $y_2  = z_2 +  \tau $ yields 
\begin{equation}\label{m3}  y(x_1,x_2,t) =  \int_\R  \int_\R  \int_\R  a_1(x_1-y_1) a_2(x_2-  \tau -z_2) a_3(t - \tau) f(y_1,z_2) 
 d \tau dz_2 dy_1.  
\end{equation}

To observe  the nonuniqueness, choose first a kernel that has only  blurring in the time-variable, i.e., a motion blur: 
\begin{equation*}   k_1:= k(x_1,x_2,t) = \delta(x_1) \delta(x_2) \psi(t),  
\end{equation*}
where $\delta$ represents the Dirac distribution.
In a second case, we consider a corresponding blur in the $x_2$-variable 
\begin{equation*}  k_2:= k(x_1,x_2,t) = \delta(x_1) \psi(-x_2) \delta(t).   
\end{equation*} 
An illustration of the different  factors of 
$k_1$ and $k_2$ is given in \cref{fig:fig1}.

The main point in the motion blur uncertainty is that both kernels yield the same data; 
hence, the blind deconvolution problem is ambiguous: 

Using $k_1$ in \eqref{m2}  
 yields  
\begin{align*}
  y:= k_1*u =  \int_\R \psi(t -  x_2 +\sigma) f(x_1,\sigma) d \sigma  
  \end{align*}
while using  $k_2$ in \eqref{m3} yields   
\begin{align*}
  y:= k_2*u &=  \int_\R  \psi(-(x_2-  t -z_2))  f(x_1,z_2) dz_2  \\
  &= 
  \int_\R  \psi(t -x_2 + z_2) f(x_1,z_2) dz_2  =   k_1*u .
  \end{align*}
Thus, for a moving object, a temporal blur and a corresponding spatial blur 
(in the opposite movement direction) are indistinguishable.  By linearity, 
a similar  issue can appear in a rank-$R$ decomposition \eqref{contCP}.

Note that this effect is most pronounced when there is 
a single object
moving in the frame with constant velocity, while it might disappear for 
multiple moving objects or with irregular movement.  
We also mention that we observed this effect only in a few numerical experiments and only with severe over-regularization for large noise. In practically relevant cases, it was 
not reproducible. The reason might be that for variational regularization  in a low-noise (or rather no-noise-limit) case, the reconstructed kernel does not converge to the "ground truth", as observed by Perrone and Favaro \cite{PerroneFavaro1}; see also 
\cite{Benichoux}.

\begin{figure}[H]
\begin{minipage}{0.49\textwidth} 
\centering
\includegraphics[width=0.9\textwidth]{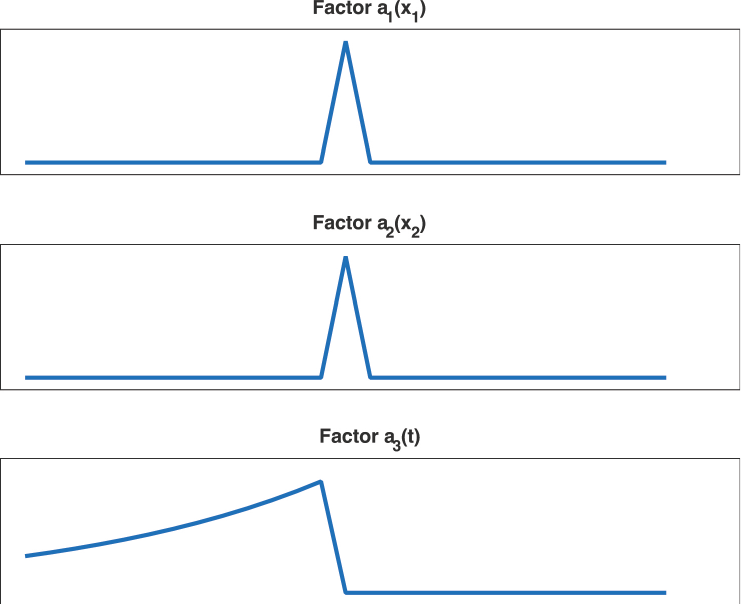}\\
{\footnotesize Factors of kernel $k_1$}
\end{minipage} 
\begin{minipage}{0.49\textwidth} 
\centering 
\includegraphics[width=0.9\textwidth]{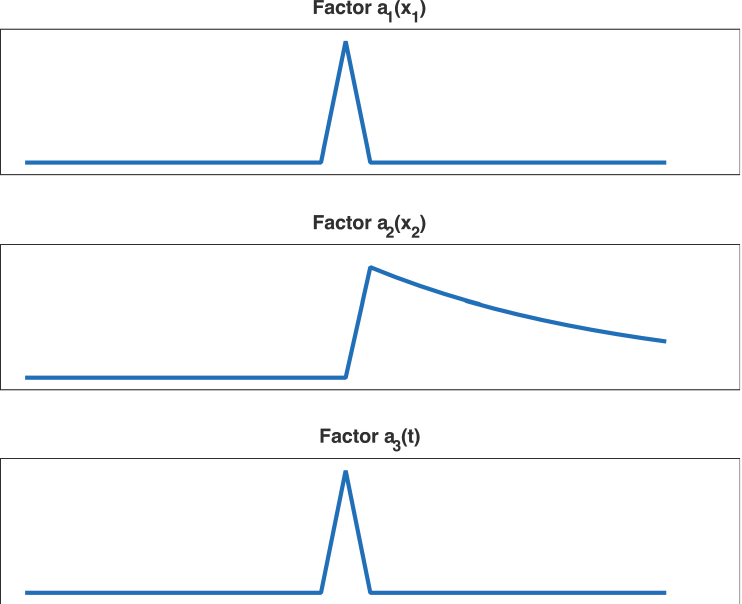}\\
{\footnotesize Factors of kernel $k_2$.} 
\end{minipage} 
\caption{Illustration of the uncertainty in the kernel factors. Left: the factors $a_1,a_2,a_3$ for the 
first kernel $k_1 = a_1(x_1) a_2(x_2) a_3(x_3)$.  
Right: alternative factors for the second kernel 
$k_2=a_1(x_1) a_2(x_2) a_3(x_3)$ yielding the same 
data for the moving object example in Section~\ref{sec:uncertainty}. 
} \label{fig:fig1}
\end{figure} 

\section{The algorithm}
In this  section, we describe the algorithm for solving the 3D blind deconvolution problem.

 As discussed above, instead of estimating all entries of $\bf k$ independently, the kernel is represented by a rank-\(R\) canonical polyadic decomposition:
 Let us define the matrices of factors 
\[\tba_{1} = [\ba_{1,1} \cdots \ba_{1,R}]\in \mathbb R^{I\times R},\quad
\tba_{2} =  [\ba_{2,1} \cdots \ba_{2,R}]\in \mathbb R^{J\times R},\quad
    \tba_{3} =  [\ba_{3,1} \cdots \ba_{3,R}]\in \mathbb R^{K\times R}.\]

To solve \cref{CP1}, we rely on an alternating minimization algorithm (see Algorithm \ref{alternate}), where the sharp 3D image $\bu$ and 3D kernel $\bk$ are updated alternately. In Algorithm~\ref{alternate}, we experimentally use a  value of $b\in[0.900,1].$
The expression $\operatorname{prox}_{\alpha \mathrm{TV}}$ denotes the proximal map with respect 
to the functional $\alpha |.|_{TV}$;
see, e.g., \cite{Beckbook}.

\begin{algorithm}[H]
\caption{Blind Deconvolution Algorithm}
\label{alternate}

\KwData{Observed blurred input $\bg$, blur size, initial $\alpha$, final $\alpha_{\min}$}
\KwResult{Reconstructed output $\bu$, estimated kernel $\bk$}

$\bu^0 \leftarrow \mathrm{pad}(\bg)$\;
$\bk^0 \leftarrow \mathrm{CP\_initialize}(\bk_x,\bk_y,\bk_z)$\;

\While{not converged}{
   $\bu^{t+1} \leftarrow 
    \operatorname{prox}_{\alpha \mathrm{TV}}
    \left(
    \bu^t -
    \epsilon_u \nabla_{\bu}
    \frac{1}{2}
    \|\bu^t * \bk^t - \bg\|_2^2
    \right)$\; \label{uupdate} 

    $\tba_{1},\tba_{2},\tba_{3} \leftarrow 
    \arg\min_{\ba_{1},\ba_{2},\ba_{3}}
    \frac{1}{2}
    \|\bu^{t+1} * \bk(\tba_{1},\tba_{2},\tba_{3}) - \bg\|_2^2$\;\label{kupdate1}

    $\bk^{t+1/2} \leftarrow 
    \displaystyle\sum_{r=1}^R {\bf a}_{1,r} \otimes  {\bf a}_{2,r} \otimes {\bf a}_{3,r}$;
\label{kupdate2} 
    $\bk^{t+1/2} \leftarrow \max\{\bk^{t+1/2},0\}$\;

    $\bk^{t+1} \leftarrow 
    \displaystyle\frac{\bk^{t+1/2}}{\|\bk^{t+1/2}\|_1}$\;

    $\alpha \leftarrow \max\{b\alpha,\alpha_{\min}\}$\;
}

$\bu \leftarrow \bu^{t+1}$\;
$\bk \leftarrow \bk^{t+1}$\;

\end{algorithm}

Now, let us describe the main steps of the algorithm in more detail.
\paragraph*{Image Update---One Proximal-Gradient Step}
The update of $\bu$ in line~\ref{uupdate} refers to the following procedure:

For a fixed kernel ${\bf k} = \sum_{r=1}^R {\bf a}_{1,r} \otimes  {\bf a}_{2,r} \otimes {\bf a}_{3,r}$, the sharp image is updated by approximately solving:
\begin{equation}\label{eq:image_subproblem}
\min_u \ \frac{1}{2}\| {\bf k}*\bu - \bg\|_F^2 + \alpha |\bu|_{TV}, 
\end{equation}
 The algorithm does not fully solve \eqref{eq:image_subproblem} at each outer iteration. Instead, it performs several proximal-gradient steps. We noticed that depending on the input,  it may be necessary to perform a couple of image update for each kernel update in our alternating minimization algorithm. Furthermore, we found that iteratively decreasing the value of $\alpha$ as shown in Algorithm \ref{alternate} improves the reconstruction.

 \paragraph*{CP-parameterized kernel update.}
The second main ingredient in Algorithm \ref{alternate} is the kernel update in lines~\ref{kupdate1}--\ref{kupdate2}:

For a fixed sharp image $\bu$,  \cref{CP1} becomes:

\begin{equation}\label{kernel-opts}
\begin{split} 
 J_{CP}(u,{\bf a}_{1,r},{\bf a}_{2,r},{\bf a}_{3,r}) &= 
\frac{1}{2}\| {\bf k}*\bu - \bg\|_{F}^2\\
& \qquad \text{ s.t. } 
 {\bf k} = \sum_{r=1}^R {\bf a}_{1,r} \otimes  {\bf a}_{2,r} \otimes {\bf a}_{3,r}. 
  \end{split}
\end{equation}

 \noindent At each outer iteration in 
 Algorithm~\ref{alternate}, the sharp image $\bu$ is fixed and the blur kernel is updated. The main assumption here is that the blur kernel admits a low-rank CP representation. This reduces the number of unknowns in the kernel update and enforces a structured, low-dimensional kernel model.

The discrete kernel tensor ${\bf k}$ is 
normalized analogously to \eqref{norm}: 
Let 
\begin{equation}
    \langle \bk, \mathbf{1} \rangle
=
\sum_{i=1}^{I}\sum_{j=1}^{J}\sum_{k=1}^{K} {k}_{ijk},
    \label{eq:normalized-cp-kernel}
\end{equation}
where \(\mathbf 1\) is the all-ones tensor of size \(I\times J\times K\). 
Then the normalization reads
\[
 \langle \bk, \mathbf{1} \rangle =   \sum_{i,j,k} k_{ijk}=1.
\]

Denote the gradient of $J_{CP}(\bu,\tba_{1},
\tba_{2},\tba_{3})$ with respect to the normalized kernel
$\bf k$ by
\[
    \mathcal H
    =
    \nabla_{\bf k}J_{CP}(\bu,\tba_{1},\tba_{2},
    \tba_{3}),
\]
where ${\bf k}$ is normalized as in \cref{eq:normalized-cp-kernel}.
Using the mode-$n$ matricization of the CP tensor, the gradients with respect to
the factor matrices are
\begin{align}
    \nabla_{\tba_a} J_{CP}
    &=
    \mathcal H_{(1)} (\tba_3 \odot \tba_2),
    \label{eq:grad_a1}
    \\
    \nabla_{\tba_2} J_{CP}
    &=
    \mathcal H_{(2)} (\tba_3 \odot \tba_1),
    \label{eq:grad_a2}
    \\
    \nabla_{\tba_3} J_{CP}
    &=
    \mathcal H_{(3)} (\tba_2 \odot \tba_1),
    \label{eq:grad_a3}
\end{align}
where $\odot$ denotes the Khatri--Rao product and
$\mathcal H_{(n)}$ denotes the mode-$n$ unfolding of $\mathcal H$.
Algorithm \ref{alternate} performs one alternating gradient step in factor space:
first \(\tba_1\) is updated while \(\tba_2\) and \(\tba_3\) are fixed, then \(\ba_2\) is updated,
and finally \(\tba_3\) is updated. After each block update, an optional positivity projection is applied to the factors, and the CP columns may be rebalanced to improve numerical conditioning. A final global normalization step is then
performed so that the reconstructed kernel again satisfies
\(\sum_{i,j,k} k_{ijk}=1\).

\section{Implementation details}

In Algorithm \ref{alternate}, at each iteration we perform just one CP decomposition update to find $\bk$ and $n$  proximal total variation steps on $\bu$. Depending on the blurred input $\bg$, the value of $n$ can be as low as $1$. We do this because we experimentally observed that the kernel tends to converges faster than the blurred input $\bg$ in the alternating minimization framework (see Algorithm \ref{alternate}. 

\noindent We use the notation $*$ to denote the discrete convolution operator, where the output is computed only on the valid region; i.e., if $\bg = \bu * \bk$, with $\bk \in \mathbb{R}^{I \times J \times K}$ and $\bu \in \mathbb{R}^{m \times n \times o}$, then we have
$\bg \in \mathbb{R}^{(m-I+1)\times(n-J+1) \times (o-K+1)}$. As shown in \cite{PerroneFavaro2}, the proper use of the convolution operator, as well as the correct choice of the domains of $\bu$ and $\bk$, improves the performance of the algorithm. Hence, in our algorithm, we consider the domain of the sharp image $\bu$ to be larger than the domain of the blurry image $\bg$. If $\bk \in \mathbb{R}^{I \times J \times K}$ and $\bu \in \mathbb{R}^{m \times n \times o}$, then
$\bg \in \mathbb{R}^{(m-I+1)\times(n-J+1) \times (o-K+1)}.$ 

\subsection{Rebalancing.} 
A CP decomposition has a scaling indeterminacy (see \cite{koldanet}). This means that  we can scale the individual vectors, i.e.,
\[
    \ba_{1,r}\otimes \ba_{2,r}\otimes \ba_{3,r}
    =
    (\lambda_r \ba_{1,r})\otimes(\gamma_r \ba_{2,r})\otimes\left(\beta_r \ba_{3,r}\right).
\] as long as \(\lambda_r\beta_r\gamma_r = 1.\)

 Hence, the same rank-one tensor can be represented by factors with highly unbalanced norms. To improve numerical stability, our algorithm rebalances the three vectors in each rank-one component. This is done as follows:
for nonzero individual factor-vectors norms, the vectors are rescaled by 
\begin{equation}
    \ba_{1,r}\leftarrow \frac{g_r}{\|\ba_{1,r}\|_2}\ba_{1,r},
    \qquad
    \ba_{2,r}\leftarrow \frac{g_r}{\|\ba_{2,r}\|_2}\ba_{2,r},
    \qquad
    \ba_{3,r}\leftarrow \frac{g_r}{\|\ba_{3,r}\|_2}\ba_{3,r}.
    \label{eq:cp-rebalancing}
\end{equation}
where $g_r =\left(\|x_r\|_2\|y_r\|_2\|z_r\|_2\right)^{1/3}.$
This step is optional in the implementation, as we learned from experience that, depending on the input, it might not be needed for a good numerical output. However, it can improve output quality significantly if used.

\subsection{Relaxation.} Recall that at each kernel-update step, the algorithm updates the factor matrices sequentially. For any factor \(F\in\{\tba_1,\tba_2,\tba_3\}\) update, we define an adaptive step size
\begin{equation}
    \eta_F
    =
    \frac{\gamma}{\max\{\|\nabla_F J_{CP}\|_F,\varepsilon\}},
    \label{eq:block-stepsize}
\end{equation}
where \(\gamma>0\) is a step-scale parameter and \(\varepsilon>0\) is a small stabilization constant.  With relaxation parameter \(\rho\in(0,1]\), the block update is then
\begin{equation}
    F^{+}
    =
    (1-\rho)F
    +
    \rho\left(F-\eta_F\nabla_F J_{CP}\right).
    \label{eq:relaxed-block-update}
\end{equation}
The relaxation form is useful because it allows the gradient step to be damped without changing the computed gradient direction. It has also been proven  useful in the alternating minimization setting; see \cite{cortild2025krasnoselskii, wen2012solving, neal2011distributed} for more details. Applying relaxation to the kernel update significantly improved  numerical reconstruction. 

\subsection{Positivity Projection and Final Kernel Normalization.}

In the kernel update step, the updated factors are projected onto the nonnegative orthant:
   \[ \ba_j\leftarrow \max\{\ba_j,0\},\]
where $j=1,2,3.$
After the projection, the CP tensor is rebuilt, and   the reconstructed kernel is then normalized using \cref{eq:normalized-cp-kernel}.

\subsection{Causal support projection.} 

Recall that the kernel is represented by the CP decomposition ($\bk=\sum_{r=1}^{R}\ba_{1,r}\otimes \ba_{2,r}\otimes \ba_{3,r}$), where the factors correspond to the three data dimensions \cite{koldanet}. We added an option for support constraints that can incorporate known acquisition physics. For instance, if the third dimension represents time and the blur is known to be causal, we set a `causal\_len' which is the number of temporal samples retained in each third-mode factor ($\ba_{3,r}$). This implies that the entries of ($\ba_{3,r}$) beyond this causal window are set to zero, restricting the kernel to present and past frames~\cite{5206815}. This reduces physically impossible solutions and shift uncertainty. Setting `causal\_len = 0' disables the constraint and allows unrestricted or symmetric kernels, such as Gaussian-like blur, to be estimated blindly.

\subsection{Non-blind error ratio}\label{eval}

Kernel quality is summarized by the \textbf{error ratio} described in~\cite{5206815}. We run a \emph{non-blind} TV deconvolution with the estimated kernel and, separately, with the true kernel, using the same solver settings, and then report
\begin{equation}
  \rho \;=\; \frac{\mathrm{MSE}\big(\text{recon with }K_{\mathrm{est}}\big)}
                  {\mathrm{MSE}\big(\text{recon with }K_{\mathrm{true}}\big)}.
  \label{eq:ratio}
\end{equation}
A ratio $\rho\approx 1$ means that the estimated kernel reconstructs almost as well as the true kernel. This diagnostic is run to convergence and is independent of the blind solver's stopping criteria.

\subsection{Stopping criteria}

For our algorithm, at iteration ($t$), the root-mean-square residual is computed as follows: $$\mathrm{rms}_t=||\bk_t*\bu_t-\bg||_2/\sqrt{N},$$ where $\bk_t$ and $\bu_t$ are the estimated kernel and image, respectively, at iteration $t$, and $\bg$ is the observed blurry/noisy image. 
We then follow  Morozov’s discrepancy principle \cite{morozov2012methods} 
accordint to 
which the algorithm stops at the first iteration satisfying ($\mathrm{rms}_t\leq\hat{\sigma}$), since reducing the residual below the estimated noise level may fit noise rather than recover additional signal. If this condition is not reached within the iteration budget, the final iterate is reported.
To ensure that our run remains blind we estimate the noise standard deviation directly from the observation ($\bg$). We use the `NoiseLevel' method described in \cite{6607209}. This is applied independently to each two-dimensional slice, and the median of the valid slice estimates defines the noise estimate ($\hat{\sigma}$). 

\section{Numerical experiments}

To test the performance of our algorithm, we have three types of experiments. In all simulations, final $\alpha_{min} = 0.0001$ and initial $\alpha = 0.02$. All experiments were run assuming a low-rank kernel and we used rank $R=1$. We evaluated the estimated kernel by taking the ratio between the nonblind deconvolution error with the estimated kernel and the nonblind deconvolution with the true kernel; see section~\ref{eval}. All experiments were conducted using the programming language Matlab.

\subsection{Case 1} The first experiment was performed on an MRI image of size $128\times 128 \times 27$ from the Matlab library. Our blind experiment was run using a synthetic Gaussian point spread function (PSF) with support $11\times 11\times 5$. \Cref{fig:mri}
shows different slices of the MRI image. To show the performance of our method we report PSNR, SSIM and error ratio ($\rho$) values in \cref{fig:mri}.

\begin{figure}[H]
    \centering

    \makebox[\linewidth][c]{%
        \mriheading{z=14}\hfill
        \mriheading{z=10}\hfill
        \mriheading{z=3}\hfill
        \mriheading{z=20}%
    }

    \vspace{3pt}

    \makebox[\linewidth][c]{%
        \mriimage{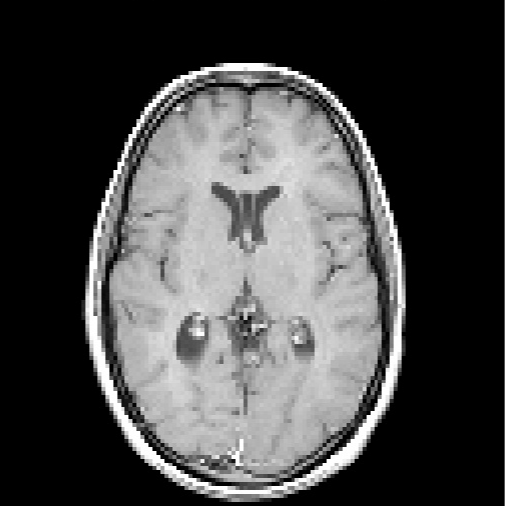}\hfill
        \mriimage{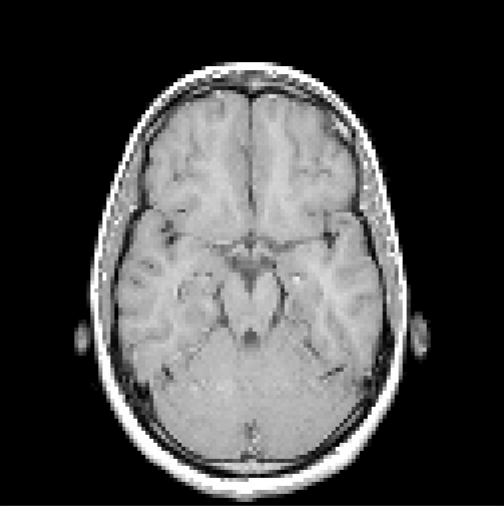}\hfill
        \mriimage{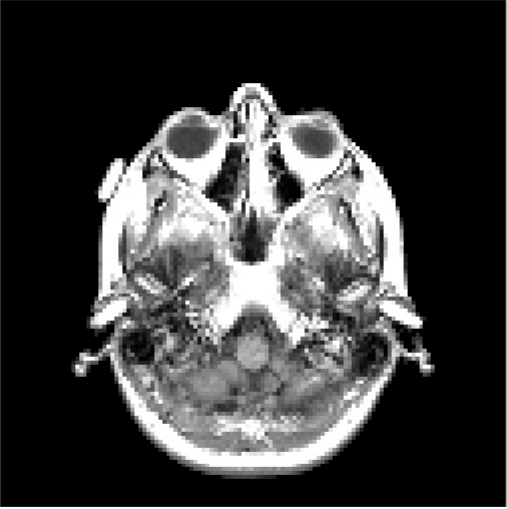}\hfill
        \mriimage{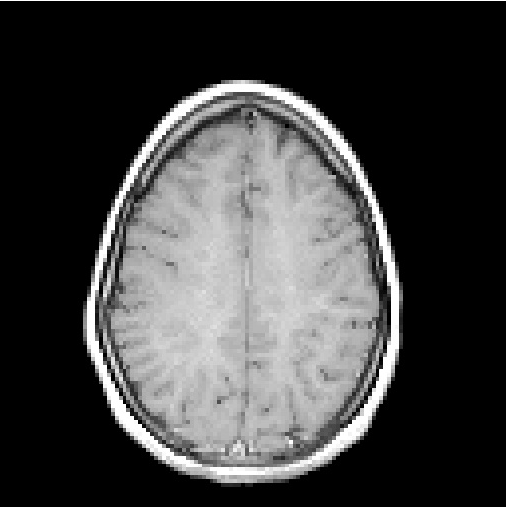}%
    }

    \vspace{6pt}

    \makebox[\linewidth][c]{%
        \mriresult{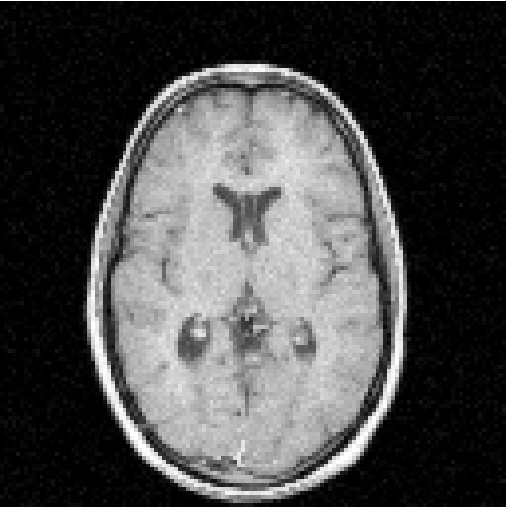}{%
            SSIM = 0.92541\\
            PSNR = 36.93 dB%
        }\hfill
        \mriresult{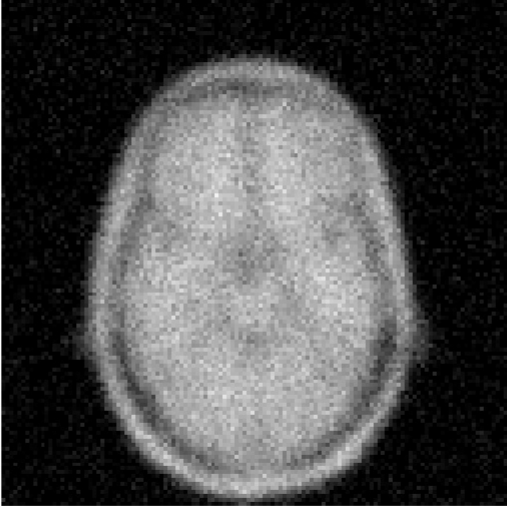}{%
            SSIM = 0.61295\\
            PSNR = 26.72 dB%
        }\hfill
        \mriresult{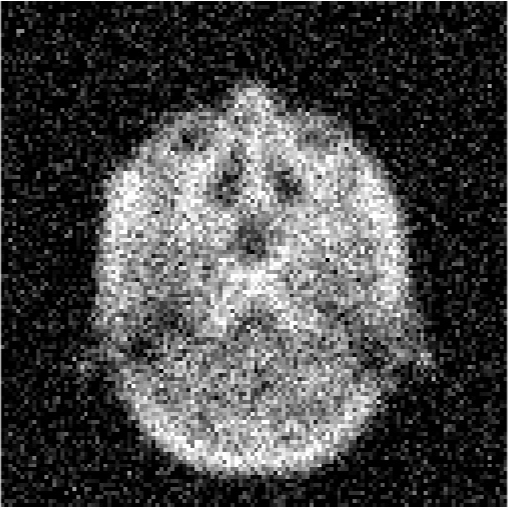}{%
            SSIM = 0.26115\\
            PSNR = 22.78 dB%
        }\hfill
        \mriresult{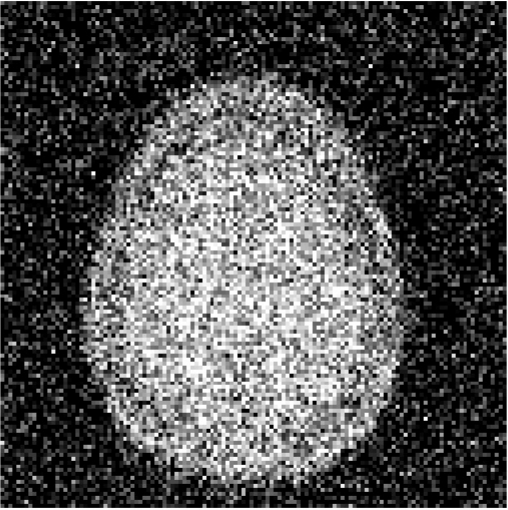}{%
            SSIM = 0.14250\\
            PSNR = 19.32 dB%
        }%
    }

    \vspace{6pt}

    \makebox[\linewidth][c]{%
        \mriresult{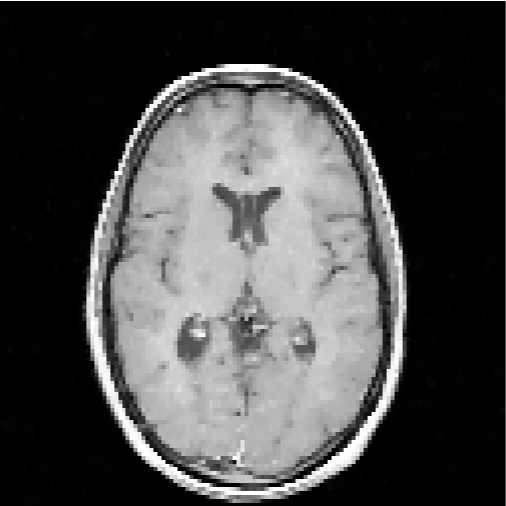}{%
            SSIM = 0.97643\\
            PSNR = 39.31 dB\\
            $\rho = 1.214$%
        }\hfill
        \mriresult{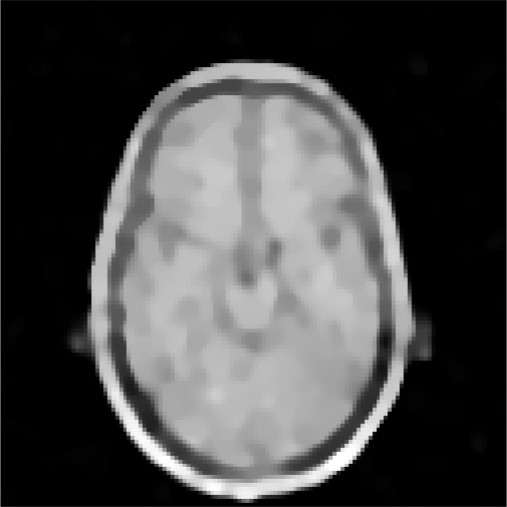}{%
            SSIM = 0.85631\\
            PSNR = 29.09 dB\\
            $\rho = 1.019$%
        }\hfill
        \mriresult{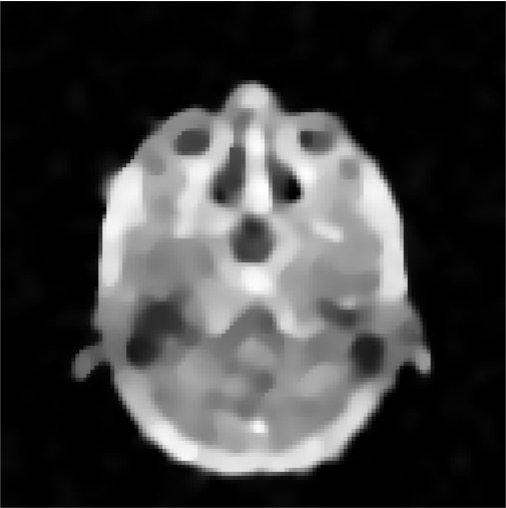}{%
            SSIM = 0.76163\\
            PSNR = 27.41 dB\\
            $\rho = 1.036$%
        }\hfill
        \mriresult{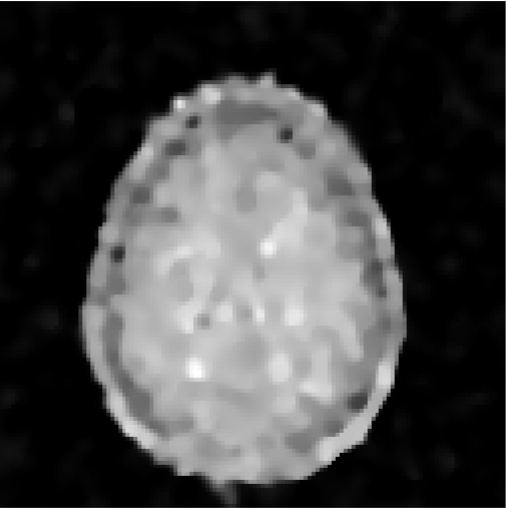}{%
            SSIM = 0.70624\\
            PSNR = 27.01 dB\\
            $\rho = 1.029$%
        }%
    }

    \caption{Representative axial slices from the 3D MRI volume.
    The $z$-values above indicate the specific slices shown.
    The first row shows the ground-truth slices, the second row shows the blurred and noisy observations with their SSIM and PSNR values, and the third row shows the reconstructed slices with their SSIM, PSNR, and kernel error ratio~($\rho$) values.}

    \label{fig:mri}
\end{figure}

\subsection{Case 2}

For this second experiment, we used an hyperspectral image, Samson \cite{zhu2017hyperspectral} of size $95\times 95\times 156$. We selected the first 78 bands for our experiment. A synthetic Gaussian kernel with support size 11 by 11 by 7 was used. To show the performance of our method we report PSNR, SSIM and error ratio ($\rho$) values in \cref{fig:hsi}.

\begin{figure}[H]
    \centering

    \makebox[\linewidth][c]{%
        \mriimage{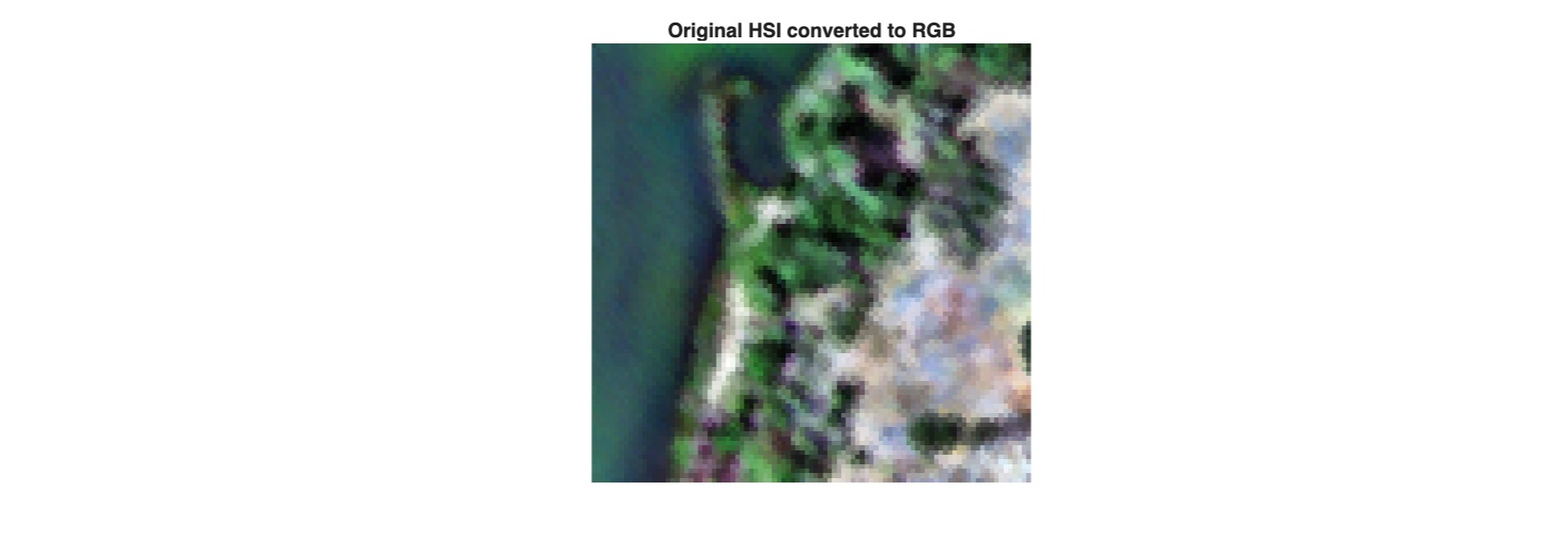}\hfill
        \mriimage{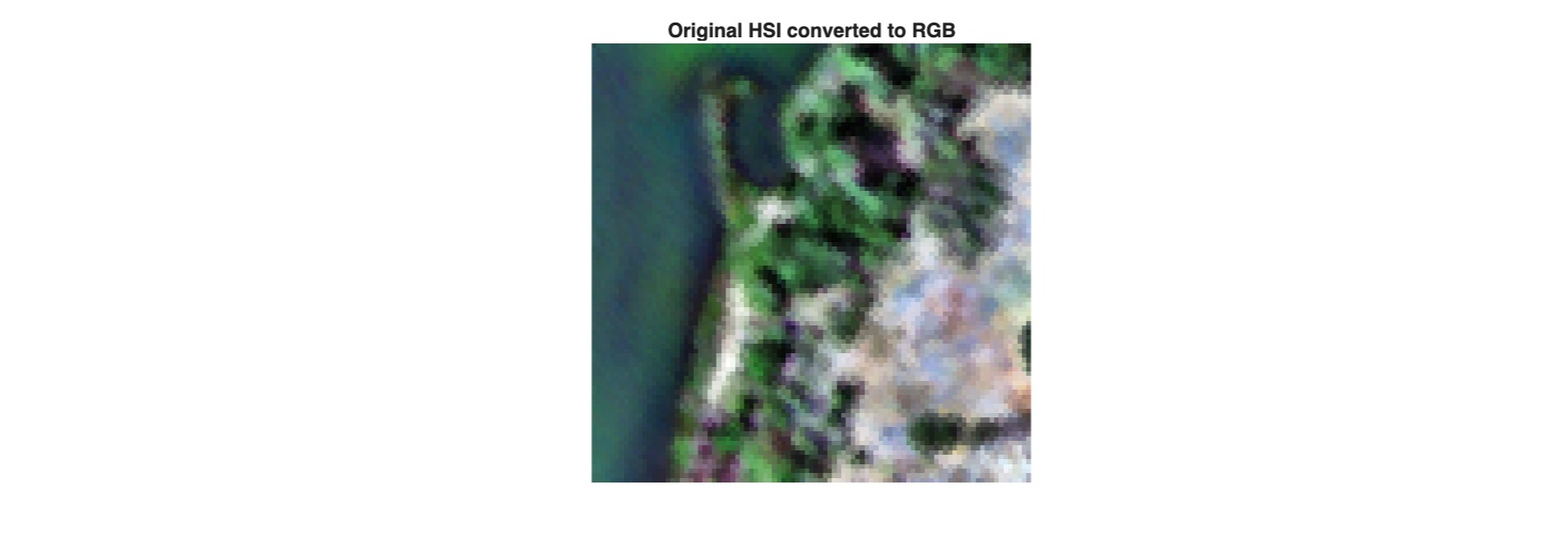}\hfill
        \mriimage{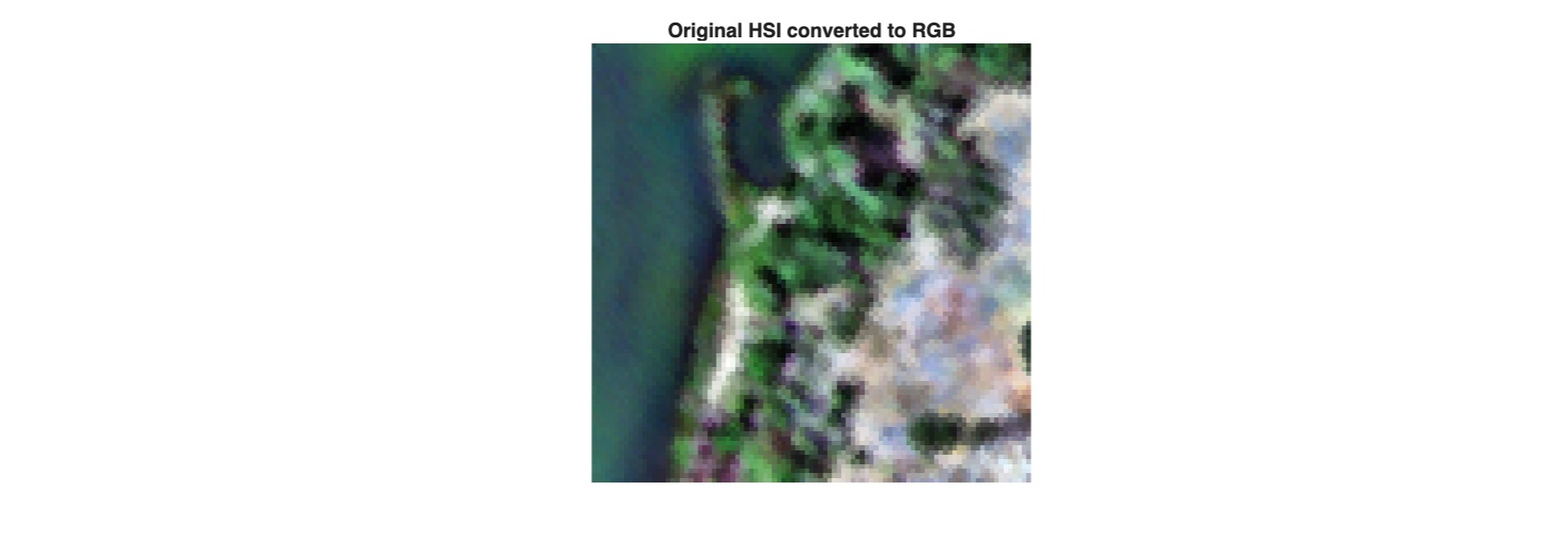}\hfill
        \mriimage{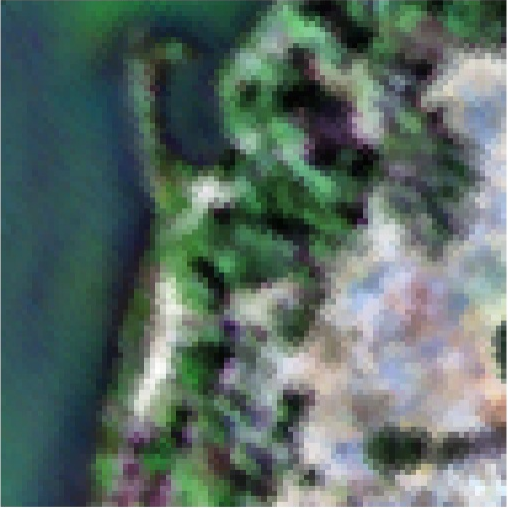}%
    }

    \vspace{6pt}

    \makebox[\linewidth][c]{%
        \mriresult{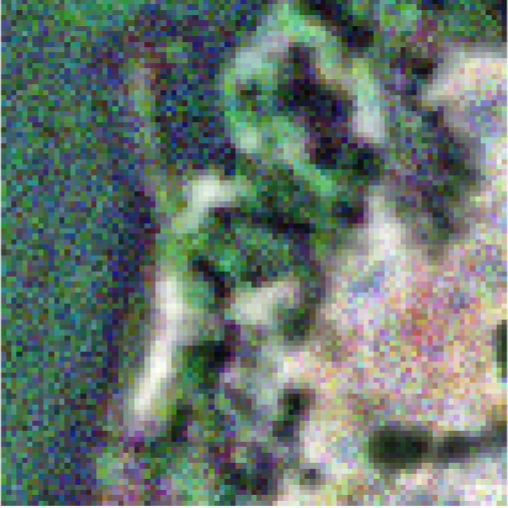}{%
            SSIM = 0.80136\\
            PSNR = 33.41 dB%
        }\hfill
        \mriresult{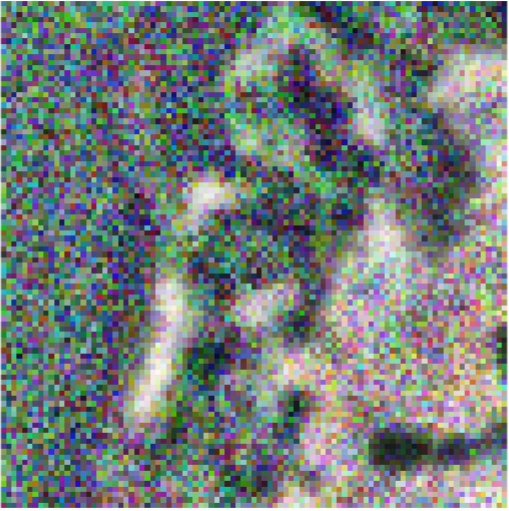}{%
            SSIM = 0.42484\\
            PSNR = 25.72 dB%
        }\hfill
        \mriresult{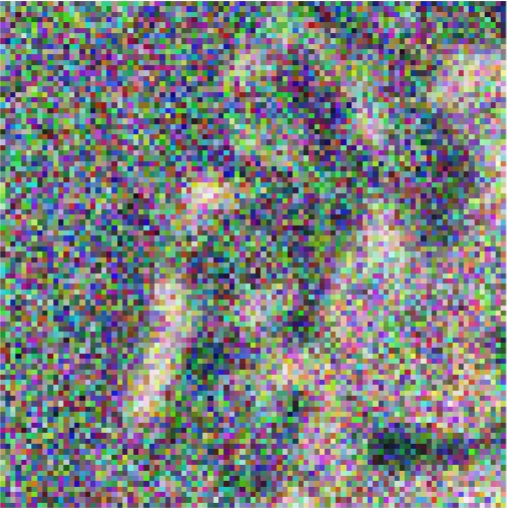}{%
            SSIM = 0.17431\\
            PSNR = 19.86 dB%
        }\hfill
        \mriresult{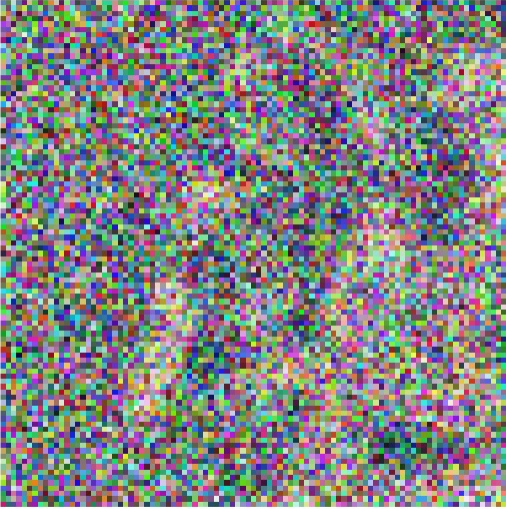}{%
            SSIM = 0.05508\\
            PSNR = 13.93 dB%
        }%
    }

    \vspace{6pt}

    \makebox[\linewidth][c]{%
        \mriresult{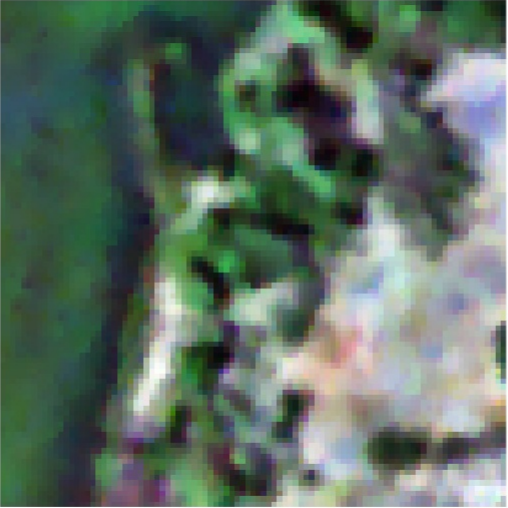}{%
            SSIM = 0.96579\\
            PSNR = 39.12 dB\\
            $\rho = 1.022$%
        }\hfill
        \mriresult{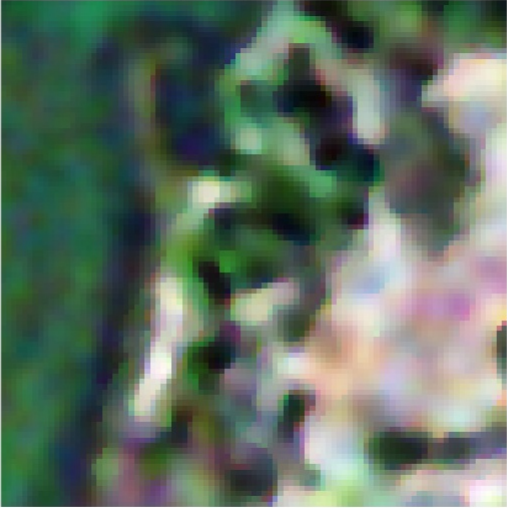}{%
            SSIM = 0.92269\\
            PSNR = 34.91 dB\\
            $\rho = 1.076$%
        }\hfill
        \mriresult{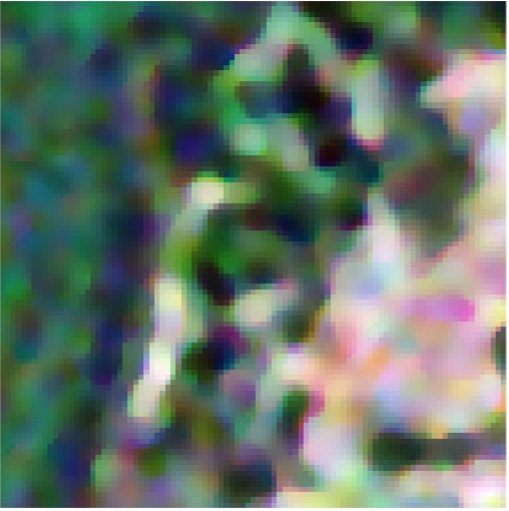}{%
            SSIM = 0.87479\\
            PSNR = 32.67 dB\\
            $\rho = 1.011$%
        }\hfill
        \mriresult{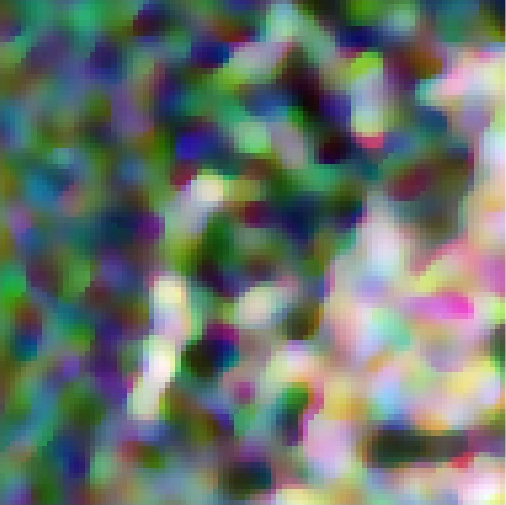}{%
            SSIM = 0.78371\\
            PSNR = 30.09 dB\\
            $\rho = 1.012$%
        }%
    }

    \caption{Hyperspectral image results. The first row presents the
    ground-truth images, the second row presents the blurred and noisy
    observations with their SSIM and PSNR values, and the third row
    presents the reconstructed images with their SSIM, PSNR, and kernel
    error ratio~($\rho$) values.}

    \label{fig:hsi}
\end{figure}

\subsection{Case 3}

Our third experiment is a short grayscale video (Matlab's shuttle.avi, 118 frames) treated as a three-dimensional 
object (two spatial dimensions and one temporal dimension) that was degraded using a synthetic motion blur \cref{fig:shuttle}. The video was cropped to a size $144\times 128\times 59$. To show the performance of our method, we report PSNR, SSIM and error ratio ($\rho$) values in \cref{fig:shuttle}. 

\begin{figure}[H]
    \centering

    \makebox[\linewidth][c]{%
        \mriheading{z=30}\hfill
        \mriheading{z=40}\hfill
        \mriheading{z=50}\hfill
        \mriheading{z=15}%
    }

    \vspace{3pt}

    \makebox[\linewidth][c]{%
        \mriimage{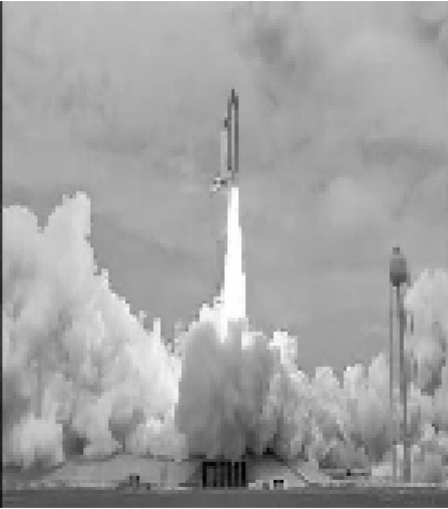}\hfill
        \mriimage{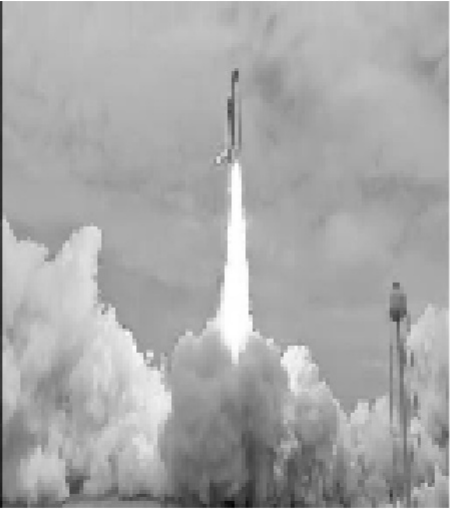}\hfill
        \mriimage{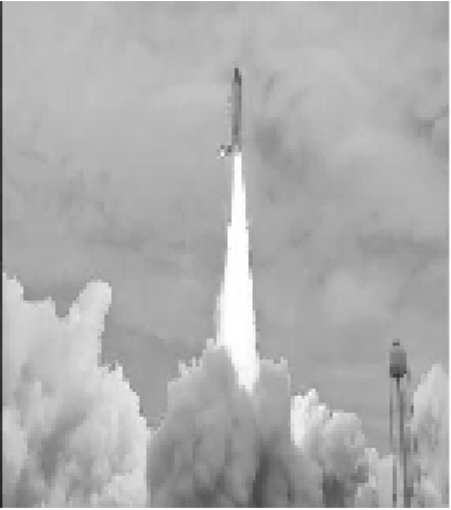}\hfill
        \mriimage{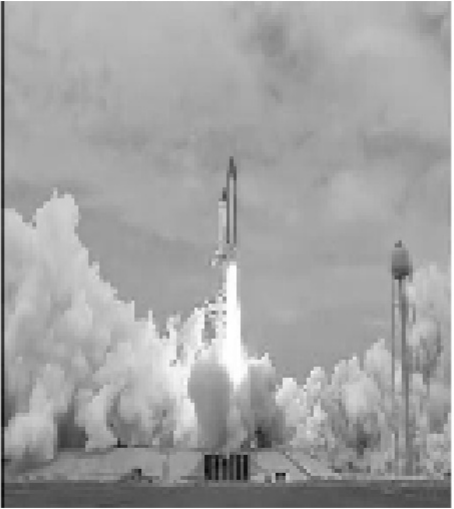}%
    }

    \vspace{6pt}

    \makebox[\linewidth][c]{%
        \mriresult{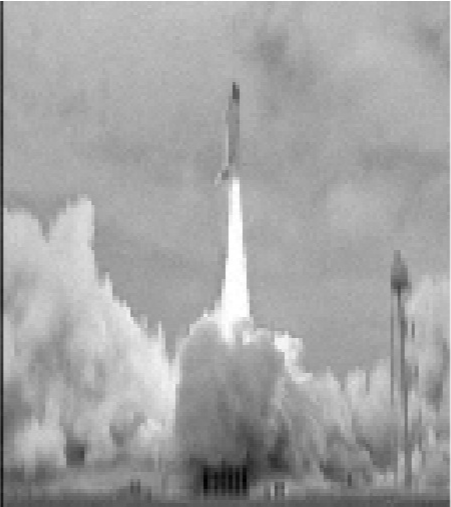}{%
            SSIM = 0.80207\\
            PSNR = 28.04 dB%
        }\hfill
        \mriresult{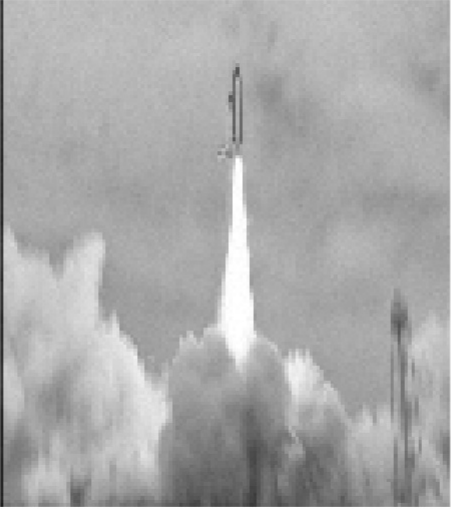}{%
            SSIM = 0.80207\\
            PSNR = 28.04 dB%
        }\hfill
        \mriresult{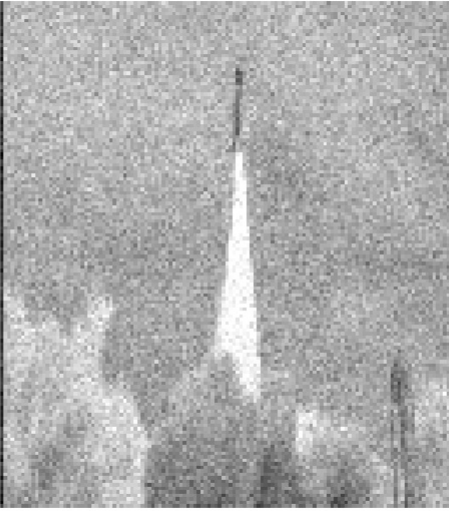}{%
            SSIM = 0.33196\\
            PSNR = 22.45 dB%
        }\hfill
        \mriresult{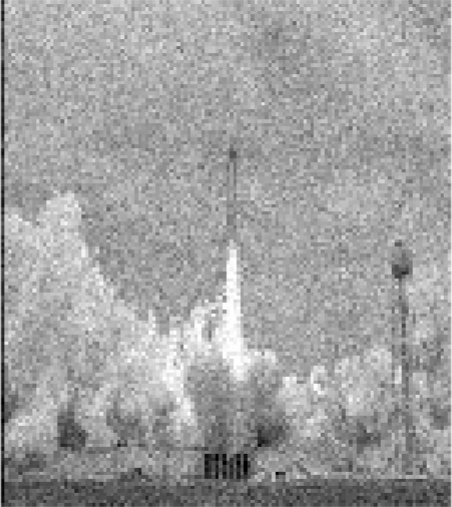}{%
            SSIM = 0.33196\\
            PSNR = 22.45 dB%
        }%
    }

    \vspace{6pt}

    \makebox[\linewidth][c]{%
        \mriresult{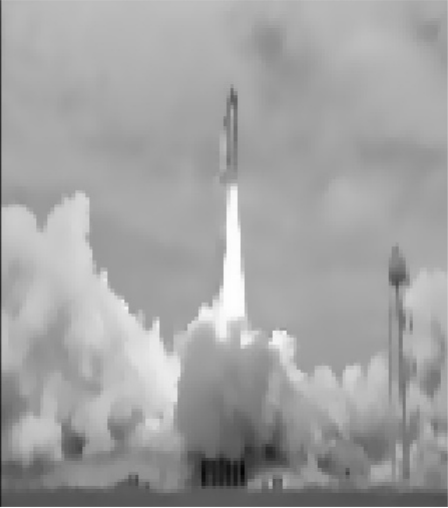}{%
            SSIM = 0.94148\\
            PSNR = 34.86 dB\\
            $\rho = 1.299$%
        }\hfill
        \mriresult{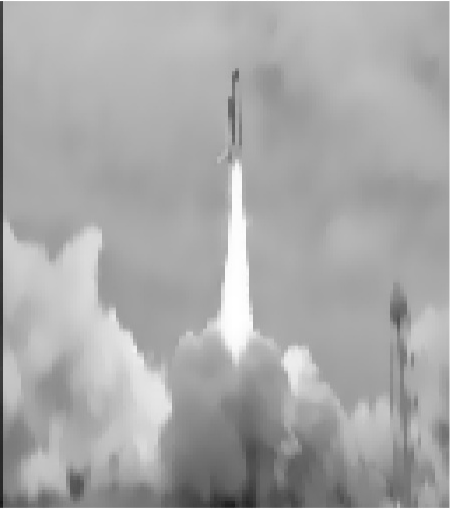}{%
            SSIM = 0.94148\\
            PSNR = 34.86 dB\\
            $\rho = 1.299$%
        }\hfill
        \mriresult{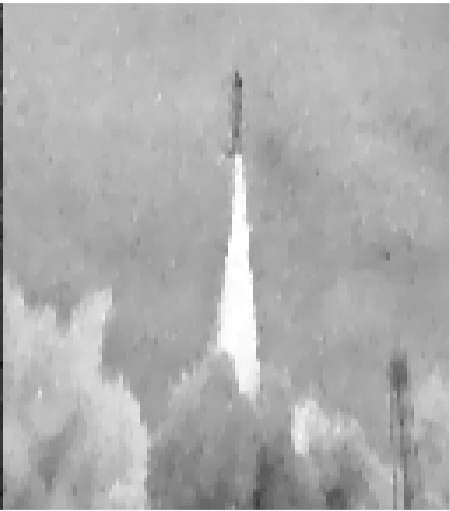}{%
            SSIM = 0.88457\\
            PSNR = 32.13 dB\\
            $\rho = 1.03$%
        }\hfill
        \mriresult{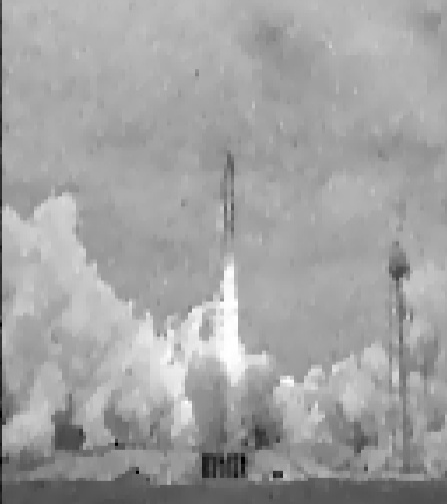}{%
            SSIM = 0.88457\\
            PSNR = 32.13 dB\\
            $\rho = 1.03$%
        }%
    }

    \caption{Representative frames from the 3D video. The first two columns correspond to the first experiment, while the last two columns correspond to the second experiment with a more degraded input. The $z$-values indicate the specific frames shown. The first row presents the ground-truth frames, the second row presents the blurred and noisy observations with their SSIM and
    PSNR values, and the third row presents the reconstructed frames with their SSIM, PSNR, and kernel error ratio~($\rho$) values.}

    \label{fig:shuttle}
\end{figure}

\bibliographystyle{plain}
\bibliography{references}
\end{document}